\newtheorem{theorem}{Theorem}
\newtheorem{corollary}[theorem]{Corollary}
\newtheorem{example}[theorem]{Example}
\newtheorem{lemma}[theorem]{Lemma}
\newtheorem{proposition}[theorem]{Proposition}
\newtheorem{remark}[theorem]{Remark}
\begin{document}
\title{Regular elements determined by generalized inverses}
\author{Adel Alahmadi, S. K. Jain, and Andr\'e Leroy}
\address{Adel Alahmedi\\
Department of Mathematics\\
King abdulaziz University\\
Jeddah, SA\\
Email:adelnife2@yahoo.com;\\
S. K. Jain\\
Department of Mathematics\\
King Andulaziz University Jeddah, SA,and\\
Ohio University, USA\\
Email:jain@ohio.edu\\
Andre Leroy\\
Facult\'e Jean Perrin\\
Universit\'e d'Artois\\
Lens, France\\
Email:andre.leroy@univ-artois.fr}

\begin{abstract}
In a semiprime ring, von Neumann regular elements are determined by their
inner inverses. In particular, for elements $a,b$ of a von Neumann regular ring $R$, $a=b$ if
and only if $I(a)=I(b)$, where $I(x)$ denotes the set of inner
inverses of $x\in R$. We also prove that, in a semiprime ring, the same is
true for reflexive inverses.
\end{abstract}

\maketitle



\section{Introduction and preliminaries}

\bigskip

In this short note, $R$ will stand for an associative ring with unity. An
element $a\in R$ is (von Neumann) regular if there exists $x\in R$ such that 
$a=axa$. Such an element $x$ is called an inner inverse (also called von
Neumann inverse or generalized inverse) of $a$. The set of regular elements
of a ring $R$ is denoted by $Reg(R)$. A ring $R$ is regular if $Reg(R)=R$.
Note that a regular ring is semiprime. In general, a regular element may
have more than one inner inverse. We denote the set of inner inverses of $a$
by $I(a)$. An element $x\in R$ is called an outer inverse of $a$ if $xax=x$.
Note that if $x\in I(a)$ then $xax$ is both an inner and an outer inverse of 
$a$. An element $x\in R$ is called a reflexive inverse of $a$ if it is both
an inner and an outer inverse of $a$. Denote the set of reflexive inverses
of $a$ by $Ref(a)$. We first obtain a necessary and sufficient condition for 
$I(a)\subseteq I(b)$ (Lemma \ref{Comparison of quasi inverse set}) and use
this to prove that in a semiprime ring, for $a,b\in Reg(R)$, $I(a)=I(b)$ if
and only if $a=b$ if and only if $Ref(a)=Ref(b)$. (Theorem \ref{I(a)=I(b)}
and Theorem \ref{ref(a)=ref(b)} ).

We begin with a few key lemmas.

The following is well-known (cf. \cite{BIG} Corollary 1, Chapter 2. p. 40.)

\begin{lemma}
For $a\in R$ and $a_{0}\in I(a),$ we have $I(a)=\{a_{0}+t-a_{0}ataa_{0}\mid
t\in R\}.$
\end{lemma}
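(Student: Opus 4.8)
The plan is to prove the two inclusions separately, using nothing beyond the defining relation $aa_0a = a$ together with the inner-inverse condition $axa = a$ for a generic $x \in I(a)$.

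First I would establish the inclusion $\supseteq$. Fix $t \in R$ and set $x = a_0 + t - a_0 a t a a_0$. Multiplying by $a$ on both sides and distributing gives $axa = aa_0a + ata - (aa_0a)\,t\,(aa_0a)$. Since $aa_0a = a$, the first term collapses to $a$ and the third term collapses to $ata$, so $axa = a + ata - ata = a$. Hence $x \in I(a)$, and every element of the displayed set is indeed an inner inverse of $a$.

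For the reverse inclusion $\subseteq$, I would take an arbitrary $x \in I(a)$ and exhibit a suitable parameter. The natural guess, forced by matching the leading term $a_0$ of the parametrization against the target, is $t = x - a_0$. Substituting this and expanding, the crux is to evaluate the correction term $a_0 a\,(x - a_0)\,a a_0 = a_0(axa)a_0 - a_0(aa_0a)a_0$; applying $axa = a$ to the first summand and $aa_0a = a$ to the second, both reduce to $a_0 a a_0$, so the correction term vanishes. Consequently $a_0 + t - a_0 a t a a_0 = a_0 + (x - a_0) - 0 = x$, placing $x$ in the displayed set. I do not anticipate a genuine obstacle: the argument is purely computational, and the only thing to monitor is the careful cancellation of the idempotents $aa_0$ and $a_0a$ via repeated use of $aa_0a = a$; the sole mild insight is the choice $t = x - a_0$ in the second inclusion.
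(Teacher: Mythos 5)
Your proof is correct and complete: both inclusions are verified exactly as needed, the computation $axa = a + ata - (aa_0a)t(aa_0a) = a$ handles one direction, and the choice $t = x - a_0$ (with the correction term $a_0(axa)a_0 - a_0(aa_0a)a_0 = 0$ vanishing) handles the other. The paper itself offers no proof, merely citing the result as well-known from Ben-Israel and Greville, so your argument is precisely the standard verification that the citation stands in for; there is no discrepancy to report.
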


As usual, $l(a)$ and $r(a)$ denote respectively the left and right
annihilator of an element $a\in R$. We define the inner annihilator of an
element $a\in R$, as $\{x\in R\mid axa=0\}$ and denote it by $Iann(a)$.

The next Proposition gives a link between $I(a)$ and $Ref(a)$.

\begin{proposition}
\label{I(a) and ref(a)} For $a\in Reg(R)$, let $\varphi_a:I(a)
\longrightarrow Ref(a)$ be such that $\varphi_a(x)=xax$. Then

\begin{enumerate}
\item The map $\varphi_a$ is onto.

\item $Ref(a)=I(a)aI(a)$.

\item If $x,y\in I(a)$ are such that $\varphi_a(x)=\varphi_a(y)$ then $%
x-y\in l(a)\cap r(a)$.

\item Let $x\in Ref(a)$, then $\varphi_a(x)=x$.
\end{enumerate}
\end{proposition}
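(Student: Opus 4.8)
The plan is to reduce everything to the single defining relation $axa=a$ for $x\in I(a)$, together with the observation that this relation forces both $xa$ and $ax$ to be idempotent, since $(xa)(xa)=x(axa)=xa$ and likewise $(ax)(ax)=ax$; this idempotency is the engine that drives all four items. Before treating the list I would first confirm that $\varphi_a$ actually lands in $Ref(a)$. For $x\in I(a)$ put $y=xax$. Then the inner condition holds because $aya=(axa)(xa)=a(xa)=axa=a$, and the outer condition holds because, collapsing powers of the idempotent $xa$, $yay=(xax)a(xax)=(xa)(xa)(xa)x=(xa)x=xax=y$. So $y\in Ref(a)$ and $\varphi_a$ is well defined.

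I would prove item (4) first, as it is immediate and it drives item (1): if $x\in Ref(a)$ then $x$ is in particular an outer inverse, so $\varphi_a(x)=xax=x$. For (1), observe that every reflexive inverse is in particular an inner inverse, so $Ref(a)\subseteq I(a)$; hence any $y\in Ref(a)$ lies in $I(a)$ and, by (4), satisfies $\varphi_a(y)=y$, which exhibits $y$ in the image and shows $\varphi_a$ is onto. For item (2), the inclusion $Ref(a)\subseteq I(a)aI(a)$ is then free: by (1) each $y\in Ref(a)$ equals $\varphi_a(x)=x\cdot a\cdot x$ for some $x\in I(a)$, and this clearly lies in $I(a)aI(a)$.

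The reverse inclusion $I(a)aI(a)\subseteq Ref(a)$ is the step I expect to be the main obstacle, since it requires pushing the relation through a longer word with two distinct inner inverses. Take $z=xay$ with $x,y\in I(a)$. The inner condition is short: $aza=a(xay)a=(axa)(ya)=a(ya)=aya=a$. For the outer condition one computes $zaz=xayaxay$ and locates the substring $aya=a$ in the middle, which collapses the word to $xaxay$; then the idempotency of $xa$ finishes it, $xaxay=(xa)(xa)y=(xa)y=xay=z$. Hence $z\in Ref(a)$, and together with the previous paragraph this gives $Ref(a)=I(a)aI(a)$.

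Finally, for item (3), suppose $x,y\in I(a)$ with $\varphi_a(x)=\varphi_a(y)$, that is $xax=yay$. Multiplying this equality on the left by $a$ and using $axa=a=aya$ yields $ax=ay$, so $a(x-y)=0$ and $x-y\in r(a)$; multiplying instead on the right by $a$ yields $xa=ya$, so $(x-y)a=0$ and $x-y\in l(a)$. Therefore $x-y\in l(a)\cap r(a)$, which completes the argument. I do not anticipate difficulty in any item beyond the outer-inverse verification in the reverse inclusion of (2), where keeping track of which occurrence of $aya$ or $axa$ to collapse is the only delicate point.
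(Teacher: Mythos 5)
Your proof is correct: all four items, including the well-definedness of $\varphi_a$ and the less obvious inclusion $I(a)aI(a)\subseteq Ref(a)$, are verified by sound computations resting on the idempotency of $xa$ and $ax$. The paper itself dismisses this proposition with ``This is clear,'' so your argument is simply the routine direct verification the authors intended, spelled out in full.
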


\begin{proof}
This is clear.
\end{proof}

The next lemma is straightforward.

\begin{lemma}
Let $a\in Reg(R)$ and $a_{0}\in I(a).$ \ Write $e=aa_{0},$ $f=a_{0}a$ and $%
e^{\prime }=1-e,$ $f^{\prime }=1-f$. Then

\begin{enumerate}
\item[(i)] $Iann(a)=l(a)+r(a)=Re^{\prime}+f^{\prime}R$.

\item[(ii)] $I(a)= a_0+Iann(a)=a_0 + Re^{\prime}+f^{\prime}R$.

\item[(iii)] If $a_{0}\in Ref(a),$ then $Ref(a)=a_{0}+fRe^{\prime
}+f^{\prime }Re+f^{\prime }RaRe^{\prime }$.
\end{enumerate}
\end{lemma}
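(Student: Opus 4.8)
The plan is to run everything through the two idempotents $e=aa_{0}$ and $f=a_{0}a$ and the Peirce-type decomposition $x=fxe+fxe'+f'xe+f'xe'$ that they induce. First I would record the multiplicative relations that do all the work. From $aa_{0}a=a$ one gets $ea=a$ and $af=a$, hence $e'a=0$ and $af'=0$; these already handle (i) and (ii). When in addition $a_{0}\in Ref(a)$, so $a_{0}aa_{0}=a_{0}$, one also gets $fa_{0}=a_{0}=a_{0}e$ (so $f'a_{0}=0=a_{0}e'$) and the identity $eaf=aa_{0}\cdot a\cdot a_{0}a=a(a_{0}aa_{0})a=a$, which is what makes (iii) computable.

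For (i) I would first pin down the one-sided annihilators. Since $re'a=r(e'a)=0$ we have $Re'\subseteq l(a)$, and conversely $ua=0$ forces $ue=u(aa_{0})=0$, whence $u=ue'\in Re'$; thus $l(a)=Re'$, and symmetrically $r(a)=f'R$. The inclusion $l(a)+r(a)\subseteq Iann(a)$ is immediate. For the reverse inclusion the key observation is the sandwich identity $a_{0}(axa)a_{0}=fxe$: if $axa=0$ then $fxe=0$, so in the Peirce expansion of $x$ the $fRe$-term drops out and $x=fxe'+f'x$ exhibits $x$ as a sum of an element of $l(a)$ (because $(fxe')a=fx(e'a)=0$) and an element of $r(a)$ (because $a(f'x)=(af')x=0$). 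The same computation gives the refined description $Iann(a)=\{x:fxe=0\}=fRe'\oplus f'Re\oplus f'Re'$, which I will reuse. Part (ii) is then formal: $x\in I(a)$ means $axa=a=aa_{0}a$, i.e. $a(x-a_{0})a=0$, i.e. $x-a_{0}\in Iann(a)$, so $I(a)=a_{0}+Iann(a)=a_{0}+Re'+f'R$ by (i).

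For (iii) I would invoke $Ref(a)=I(a)aI(a)$ from Proposition \ref{I(a) and ref(a)}(2) and substitute $I(a)=a_{0}+Iann(a)$. Writing a general pair as $x=a_{0}+u$, $y=a_{0}+v$ with $u,v\in Iann(a)$, and using $a_{0}aa_{0}=a_{0}$, I expand $xay=a_{0}+a_{0}av+uaa_{0}+uav=a_{0}+fv+ue+uav$. Reading off Peirce components from the description of $Iann(a)$ gives $fv\in fRe'$ and $ue\in f'Re$, sweeping all of $fRe'$ and $f'Re$ respectively as $u,v$ vary; and the identity $eaf=a$ rewrites the cross term as $uav=(ue)a(fv)$, which lands in $f'RaRe'$. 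This produces exactly the three summand types $fRe'$, $f'Re$, $f'RaRe'$ asserted in the statement.

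The step I expect to be the real obstacle is precisely this cross term $uav$, and it is where care is needed. It is not an independent summand: by $eaf=a$ it equals $(ue)a(fv)$, so the $f'Re'$-component of a reflexive inverse is forced to be the product $cab$ of its $f'Re$-component $c$ and its $fRe'$-component $b$, rather than varying freely. I would therefore make the content of (iii) precise as the parametrization $Ref(a)=\{a_{0}+b+c+cab:\ b\in fRe',\ c\in f'Re\}$, and observe that the last coordinate $cab=(f'\cdot)( eaf)(\cdot e')$ traces out $f'RaRe'$; this is the sense in which $Ref(a)=a_{0}+fRe'+f'Re+f'RaRe'$. If one prefers to bypass Proposition \ref{I(a) and ref(a)}(2), the same conclusion follows by imposing $xax=x$ on $x=a_{0}+u$ directly: one computes $xax=a_{0}+fu+ue+uau$ with $fu=b$, $ue=c$, and $uau=cab$, so the reflexivity condition is exactly $f'ue'=cab$, the relations $e'a=0$, $af'=0$, $e'af'=0$ ensuring every other contribution vanishes.
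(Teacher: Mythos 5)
Your proof is correct, and it follows the route the paper intends: the paper's entire proof of this lemma is the remark that (iii) follows from statement (2) of Proposition \ref{I(a) and ref(a)}, with (i) and (ii) declared straightforward. Your Peirce-type computations supply exactly the missing details --- the identities $e'a=0$, $af'=0$, $eaf=a$, the sandwich identity $a_{0}(axa)a_{0}=fxe$ giving $Iann(a)=\{x \mid fxe=0\}$, and the expansion of $xay$ for $x,y\in I(a)$.

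Your remark about the cross term is not a quibble; it is a genuine correction to the statement. Read literally as an equality of sets (a Minkowski sum), (iii) is false. Take $R=M_{2}(k)$ for a field $k$ and $a=a_{0}=e_{11}$, so $e=f=e_{11}$: then $fRe'=ke_{12}$, $f'Re=ke_{21}$, $f'RaRe'=ke_{22}$, so the right-hand side of (iii) is the set of all matrices with $(1,1)$ entry equal to $1$, whereas $Ref(e_{11})=\left\{ \begin{pmatrix} 1 & \beta \\ \gamma & \gamma\beta \end{pmatrix} : \beta,\gamma\in k \right\}$. For instance the identity matrix lies in the sum, but $1\cdot e_{11}\cdot 1=e_{11}\neq 1$, so it is not a reflexive inverse of $e_{11}$. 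Your parametrization $Ref(a)=\{a_{0}+b+c+cab \mid b\in fRe',\ c\in f'Re\}$, in which the $f'Re'$-component is forced to equal the product of the other two, is the correct form of (iii), and both of your derivations of it (via $Ref(a)=I(a)aI(a)$, or by imposing $xax=x$ directly on $x=a_{0}+u$) are sound. Nothing downstream in the paper is damaged by this correction, since the proofs of Theorems \ref{I(a)=I(b)} and \ref{ref(a)=ref(b)} use $Ref(a)=I(a)aI(a)$ rather than part (iii) of this lemma.
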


\begin{proof}
We simply mention that the statement (iii) can be proved by using statement
(2) of the above proposition \ref{I(a) and ref(a)}.
\end{proof}

\section{Characterization of $I(a)$ and $Ref(a)$}

We can now state our first result in the following proposition.

\begin{proposition}
\label{Invariance} Let $R$ be a semiprime ring. If $a\in Reg(R)$, then for
any $b\in R,$ $bI(a)b$ is a singleton set if and only if $b\in Ra\cap aR$.
\end{proposition}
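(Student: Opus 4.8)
The plan is to fix an inner inverse $a_{0}\in I(a)$ and exploit the description $I(a)=a_{0}+Iann(a)=a_{0}+Re'+f'R$ from the preceding lemma, where $e'=1-aa_{0}$ and $f'=1-a_{0}a$. Multiplying on both sides by $b$ turns this into $bI(a)b=ba_{0}b+(bRe'b+bf'Rb)$. Writing $H=bRe'b+bf'Rb$, each of $bRe'b$ and $bf'Rb$ is the image of $R$ under an additive map $r\mapsto bre'b$ (resp. $r\mapsto bf'rb$), hence an additive subgroup, and so is their sum $H$. Thus $bI(a)b$ is a coset of $H$, and it is a singleton if and only if $H=0$; since $bRe'b,bf'Rb\subseteq H$ are themselves subgroups, this is equivalent to the two conditions $bRe'b=0$ and $bf'Rb=0$. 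The whole problem is therefore reduced to showing that these two vanishing conditions hold exactly when $b\in Ra\cap aR$.

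For the converse implication I would argue directly, with no appeal to semiprimeness. Suppose $b\in Ra\cap aR$, say $b=as=ta$ for suitable $s,t\in R$. Using $aa_{0}a=a$, one computes $e'b=(1-aa_{0})as=(a-aa_{0}a)s=0$ and $bf'=ta(1-a_{0}a)=t(a-aa_{0}a)=0$. Consequently $bRe'b=0$ and $bf'Rb=0$, so $H=0$ and $bI(a)b$ is a singleton.

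For the forward implication I would assume $bI(a)b$ is a singleton, so that $bRe'b=0$ and $bf'Rb=0$, and this is where semiprimeness enters. Multiplying the first relation on the left by $e'$ gives $(e'b)R(e'b)=e'(bRe'b)=0$, and semiprimeness (the property that $xRx=0$ forces $x=0$) yields $e'b=0$, i.e. $b=aa_{0}b\in aR$. Symmetrically, multiplying $bf'Rb=0$ on the right by $f'$ gives $(bf')R(bf')=(bf'Rb)f'=0$, whence $bf'=0$ and $b=ba_{0}a\in Ra$. Therefore $b\in Ra\cap aR$.

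I expect the forward direction to be the main obstacle. The hypothesis only delivers the \emph{sandwiched} vanishing $bRe'b=0$, which is strictly weaker than $e'b=0$ in a general ring; the essential point is that semiprimeness lets one strip off the outer factors by squaring $e'b$ (and $bf'$) through $R$. Everything else is routine bookkeeping with the idempotents $e,f,e',f'$ and the coset form of $I(a)$, and it is worth recording that only the forward implication uses the semiprime hypothesis.
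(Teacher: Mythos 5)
Your proof is correct, and it arrives at exactly the same two key identities as the paper --- $bRe'b=0$ and $bf'Rb=0$, followed by the same semiprimeness step of multiplying by $e'$ (resp. $f'$) to reach the form $xRx=0$ --- but it gets there by a different route. The paper works from Lemma 1, the Ben-Israel--Greville parametrization $I(a)=\{a_{0}+t-a_{0}ataa_{0}\mid t\in R\}$, and extracts the two vanishing conditions by the substitutions $t\mapsto(1-a_{0}a)t$ and $t\mapsto t(1-aa_{0})$; you instead invoke the annihilator decomposition $I(a)=a_{0}+Re'+f'R$ (item (ii) of the paper's subsequent lemma) and observe that $bI(a)b$ is then visibly a coset of the additive subgroup $H=bRe'b+bf'Rb$, so that being a singleton is \emph{equivalent} to $H=0$, i.e.\ to the two vanishing conditions. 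The two descriptions of $I(a)$ are of course the same set, so the arguments are computationally parallel, but your organization buys some clarity: the substitution trick becomes unnecessary, the reduction ``singleton $\Leftrightarrow$ two annihilation conditions'' is cleanly separated from the ring-theoretic input, and it is explicit that semiprimeness is used only in the forward direction (a fact that is true of the paper's proof too, but left implicit there). Conversely, the paper's version is self-contained modulo Lemma 1 alone and does not need the subgroup bookkeeping. One tiny presentational point: when you write $e'(bRe'b)=(e'b)R(e'b)$ you are implicitly using that left multiplication by $e'$ kills the relation termwise; this is fine, but stating ``$e'bre'b=(e'b)r(e'b)$ for each $r\in R$'' would make the step airtight.
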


\begin{proof}
Firstly, suppose that there exist $x,y\in R$ such that $b=xa=ay$ and let $%
a_{0}\in I(a)$. We then have that, for any $t\in R$, $%
b(a_{0}+t-a_{0}ataa_{0})b=(xaa_{0}+xat-xataa_{0})ay=xay+xatay-xatay=xay$.
This shows that indeed $bI(a)b$ is a singleton set.

\noindent Conversely, suppose that $bI(a)b=\{ba_{0}b\}$. We then have $%
b(a_{0}+t-a_{0}ataa_{0})b=ba_{0}b$, for any $t\in R$. This implies that, for
any $t\in R$, we have 
\begin{equation*}
b(t-a_{0}ataa_{0})b=0.
\end{equation*}%
Substituting $(1-a_{0}a)t$ for $t$ in this equality leads to $%
b(1-a_{0}a)tb=0 $, for any $t\in R$. The semiprimeness of $R$ then implies
that $b(1-a_{0}a)=0$, i.e. $b=ba_{0}a\in Ra$. Similarly, substituting $t$ by 
$t(1-aa_{0})$ in the above equality gives $b=aa_{0}b$. In particular, $b\in
aR$.
\end{proof}

\vspace{3mm}

We recall the following result obtained by S.K. Jain and M. Prasad (\cite{JP}%
).

\begin{lemma}
\label{Jain-Prasad} Let $R$ be a ring and let $b,d\in R$ such that $b+d$ is
a Von Neumann regular element. Then the following are equivalent:

\begin{enumerate}
\item $bR\oplus dR=(b+d)R$.

\item $Rb\oplus Rd = R(b+d)$.

\item $bR \cap dR = \{0\}$ and $Rb\cap Rd=\{0\}$.
\end{enumerate}
\end{lemma}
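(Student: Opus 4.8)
The plan is to fix $a=b+d$, pick an inner inverse $a_0\in I(a)$, and organize everything around the idempotents $e=aa_0$ and $f=a_0a$. These satisfy $e^2=e$, $f^2=f$, $eR=aR$, $Rf=Ra$ and $(1-e)a=0=a(1-f)$, all directly from $aa_0a=a$. First I would dispose of the two ``sum'' clauses: since $(b+d)R\subseteq bR+dR$ always, one has $bR+dR=aR$ if and only if $b\in aR$, i.e. $eb=b$ (and then $ed=d$ for free); dually $Rb+Rd=Ra$ iff $bf=b$. So (1) reads ``$bR\cap dR=0$ together with $eb=b,\,ed=d$'', (2) reads ``$Rb\cap Rd=0$ together with $bf=b,\,df=d$'', and (3) is the conjunction of the two intersection conditions.

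For the implications $(3)\Rightarrow(1)$ and $(3)\Rightarrow(2)$ I would isolate a small symmetric lemma. If $Rb\cap Rd=0$ then $b,d\in aR$: indeed $(1-e)b\in Rb$ and $(1-e)d\in Rd$, while $(1-e)(b+d)=(1-e)a=0$ forces $(1-e)b=-(1-e)d\in Rb\cap Rd=0$, so $b=eb\in aR$ and similarly $d\in aR$, giving $bR+dR=aR$. The mirror statement $bR\cap dR=0\Rightarrow b,d\in Ra$ follows by putting $b(1-f)\in bR\cap dR$. Combined with the directness already present in (3), these produce the sum clauses of (1) and (2).

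The real obstacle is the reverse direction: a one-sided direct-sum decomposition must be shown to force the opposite-sided intersection to vanish. My device is the element $ba_0d$ (and its companion $da_0b$), read through the decompositions $e=ba_0+da_0$ and $f=a_0b+a_0d$. Assume (1), so $eb=b$ and $ed=d$. Then $ba_0d=b(a_0d)\in bR$, while $ed=ba_0d+da_0d=d$ rewrites it as $ba_0d=d-da_0d=d(1-a_0d)\in dR$; hence $ba_0d\in bR\cap dR=0$, and symmetrically $da_0b=0$. Substituting back into $eb=ba_0b+da_0b$ yields $b=ba_0b$, that is $a_0\in I(b)$, and likewise $a_0\in I(d)$. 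Therefore $a_0b$ and $a_0d$ are idempotents with $Rb=R(a_0b)$ and $Rd=R(a_0d)$, and they are orthogonal since $(a_0b)(a_0d)=a_0(ba_0d)=0$ and $(a_0d)(a_0b)=a_0(da_0b)=0$. For orthogonal idempotents $\pi,\rho$ one has $R\pi\cap R\rho=0$ (if $z\pi=z=z\rho$ then $z=z\pi=(z\rho)\pi=z(\rho\pi)=0$), so $Rb\cap Rd=0$, the missing half of (3). The implication $(2)\Rightarrow(3)$ is the exact left-right mirror: from $bf=b,\,df=d$ one gets $ba_0d,\,da_0b\in Rb\cap Rd=0$, hence again $a_0\in I(b)\cap I(d)$, and now $ba_0,\,da_0$ are orthogonal idempotents with $bR=ba_0R$, $dR=da_0R$, forcing $bR\cap dR=0$.

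I expect no genuine difficulty beyond bookkeeping once $ba_0d$ is in play: one only checks that $e,f$ are idempotent and generate the stated ideals, that the two factorizations of $ba_0d$ really land in $bR$ and in $dR$, and the one-line fact about orthogonal idempotents. Pinpointing the element $ba_0d$ is the crux of the argument.
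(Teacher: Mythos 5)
Your proposal is correct, but there is nothing in the paper to compare it against: the paper states this lemma without proof, recalling it from the Jain--Prasad reference \cite{JP}, so you have supplied a self-contained argument for a result the paper only cites. I checked your proof and it holds up. The reduction of the sum clauses is right: $aR\subseteq bR+dR$ is automatic, and $b\in aR$ forces $d=a-b\in aR$, so $bR+dR=aR$ is equivalent to $eb=b$ (dually $bf=b$), with $e=aa_0$, $f=a_0a$ idempotent and $eR=aR$, $Rf=Ra$. The implications $(3)\Rightarrow(1)$ and $(3)\Rightarrow(2)$ via $(1-e)b=-(1-e)d\in Rb\cap Rd=0$ (resp. $b(1-f)=-d(1-f)\in bR\cap dR=0$) are correct. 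The crux, $(1)\Rightarrow(3)$, also works: from $ed=d$ you get $ba_0d=d-da_0d\in dR$ while $ba_0d\in bR$ trivially, so $ba_0d=0$, and symmetrically $da_0b=0$; substituting back gives $ba_0b=b$ and $da_0d=d$, so $a_0\in I(b)\cap I(d)$, whence $a_0b$ and $a_0d$ are orthogonal idempotents with $Rb=R(a_0b)$, $Rd=R(a_0d)$, and the one-line fact that orthogonal idempotents have trivially intersecting left ideals yields $Rb\cap Rd=0$. The mirror argument for $(2)\Rightarrow(3)$ with $ba_0$, $da_0$ is equally sound. What your approach buys is exactly the generality asserted in the statement: it needs only that $b+d$ is regular in an arbitrary ring, and it exposes the mechanism behind the left-right symmetry, namely that either one-sided direct decomposition forces $a_0$ to be a simultaneous inner inverse of $b$ and $d$, producing the two orthogonal idempotent decompositions $e=ba_0+da_0$ and $f=a_0b+a_0d$. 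This is close in spirit to how such results are proved in the literature (working with an inner inverse of the sum), but since the paper never reproduces the proof of \cite{JP}, your argument stands as an independent, complete verification.
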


The next proposition provides necessary and sufficient conditions as to when 
$I(a)\subseteq I(b),$ where $a,b\in Reg(R)$ and $R$ is semiprime.

\begin{proposition}
\label{Comparison of quasi inverse set} Let $R$ be a semiprime ring and let $%
a,b\in Reg(R)$. Then $I(a)\subseteq I(b)$ if and only if $bR\cap dR=0$ and $%
Rb\cap Rd=0$ where $a=d+b$.
\end{proposition}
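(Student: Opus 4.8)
The plan is to first convert the set inclusion $I(a)\subseteq I(b)$ into two pointwise conditions, and then recast those conditions in terms of one-sided intersections via Proposition~\ref{Invariance} and Lemma~\ref{Jain-Prasad}. Fix $a_{0}\in I(a)$ and use the description $I(a)=a_{0}+Iann(a)$ from the preceding structural lemma: a typical element of $I(a)$ is $a_{0}+y$ with $y\in Iann(a)$, and $b(a_{0}+y)b=ba_{0}b+byb$. Taking $y=0$ and then arbitrary $y$ shows that
\[
I(a)\subseteq I(b)\iff ba_{0}b=b \ \text{ and }\ b\,Iann(a)\,b=0 .
\]
The second condition says precisely that $bI(a)b=ba_{0}b+b\,Iann(a)\,b$ is a singleton, so by Proposition~\ref{Invariance} it is equivalent to $b\in Ra\cap aR$. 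Thus the whole proposition reduces to showing that ``$ba_{0}b=b$ and $b\in Ra\cap aR$'' is equivalent to ``$bR\cap dR=0$ and $Rb\cap Rd=0$'', where $d=a-b$.

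For the forward direction, writing $e'=1-aa_{0}$ and $f'=1-a_{0}a$, the memberships $b\in aR$ and $b\in Ra$ yield $e'b=0$ and $bf'=0$, hence $aa_{0}b=b$ and $ba_{0}a=b$. Then for $z\in bR\cap dR$, say $z=br_{1}=dr_{2}$, the relation $ba_{0}b=b$ gives $ba_{0}z=z$, and substituting $z=dr_{2}=(a-b)r_{2}$ collapses this to $z=ba_{0}ar_{2}-ba_{0}br_{2}=br_{2}-br_{2}=0$. The identity $Rb\cap Rd=0$ follows by the mirror-image computation, using $za_{0}b=z$ for $z\in Rb$ together with $aa_{0}b=b$ and $ba_{0}b=b$.

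For the converse, I would feed the two intersection hypotheses into Lemma~\ref{Jain-Prasad}: since $a=b+d$ is regular, $bR\cap dR=0$ and $Rb\cap Rd=0$ force $aR=bR\oplus dR$ and $Ra=Rb\oplus Rd$, so $b\in aR\cap Ra$. Proposition~\ref{Invariance} then makes $bI(a)b=\{ba_{0}b\}$ a singleton, which already secures one of the two required conditions. The remaining task is to show that this single value is $b$ itself. For this I would use $aa_{0}b=b$ (valid since $b\in aR$) to expand $b=(b+d)a_{0}b=ba_{0}b+da_{0}b$, whence $da_{0}b=b-ba_{0}b\in bR$; since also $da_{0}b\in dR$, it lies in $bR\cap dR=0$, forcing $ba_{0}b=b$. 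Combining $ba_{0}b=b$ with the singleton property gives $bxb=b$ for every $x\in I(a)$, i.e. $I(a)\subseteq I(b)$.

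The step I expect to be the main obstacle is exactly this last identification in the converse: the direct-sum hypotheses hand over the singleton property of $bI(a)b$ immediately through Proposition~\ref{Invariance}, but pinning the single value down to $b$ (equivalently, checking that the chosen $a_{0}$ really is an inner inverse of $b$) is what requires the observation that $da_{0}b$ falls inside the trivial intersection $bR\cap dR$. Everything else is routine bookkeeping with the idempotents $aa_{0}$, $a_{0}a$ and the normal form $I(a)=a_{0}+Iann(a)$.
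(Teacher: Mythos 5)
Your proof is correct and follows essentially the same route as the paper: Proposition \ref{Invariance} supplies $b\in Ra\cap aR$, elementwise computations with $d=a-b$ annihilate the two intersections, and Lemma \ref{Jain-Prasad} drives the converse. The differences are only bookkeeping---you work with a fixed $a_{0}$ via $I(a)=a_{0}+Iann(a)$ where the paper manipulates the sets $bI(a)d$, $dI(a)b$, $dI(a)d$---and your converse (in particular the observation that $da_{0}b\in bR\cap dR=0$ pins $ba_{0}b=b$) usefully fills in the details the paper dismisses as ``easy.''
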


\begin{proof}
Since $I(a)\subseteq I(b)$, we have $bxb=b$ for every $x\in I(a)$. By
Proposition \ref{Invariance}, $b\in Ra\cap aR$. Write $b=\alpha a=a\beta $
for some $\alpha,\beta \in R$. Then $bI(a)a=b.$ Next, $%
bI(a)d=bI(a)a-bI(a)b=b-bI(a)b=0.$ 
Consider now $dI(a)b=aI(a)b-bI(a)b=a\beta -bI(a)b=b-b=0.$ We thus have 
\begin{equation*}
bI(a)d=0 \quad \mathrm{and} \quad dI(a)b=0 \quad \quad \quad (1)
\end{equation*}
Then, for any $x\in I(a)$, we have $b+d=a=axa=(b+d)x(b+d)=bxa+dxb+
dxd=b+0+dxd.$ This yields, 
\begin{equation*}
dI(a)d=d \quad \quad \quad \quad \quad \quad \quad \quad \quad (2)
\end{equation*}
Now, we proceed to show $dR\cap bR=0.$ Let $bx=dy\in bR\cap dR.$ Multiplying
both sides of the equality (2) by $y$ on the right and using $bx=dy$ we
obtain $dI(a)bx=dy.$ As proved above, we have $dI(a)b=0.$ and so $dy=0$.
This proves our assertion.

\noindent Similarly, we show that $Rb\cap Rd=0$. Let $xb=yd\in Rb\cap Rd.$
Now, multiplying both sides of the equality (2) on the left by $y$, we get $%
ydI(a)d=yd$. This proves $xbI(a)d=yd.$ Since $bI(a)d=0$, we obtain $yd=0,$
proving $Rb\cap Rd=0$.

\noindent The converse is easy using the above lemma \ref{Jain-Prasad}.
\end{proof}

Next, we show, in particular, that the regular elements of a semiprime ring
are equal if their sets of inner inverses are the same.

\begin{theorem}
\label{I(a)=I(b)} Let $R$ be a semiprime ring and $a,b\in Reg(R)$. Then $%
I(a)=I(b)$ if and only if $a=b$.
\end{theorem}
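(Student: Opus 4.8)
The forward implication needs no work: if $a=b$ then trivially $I(a)=I(b)$. For the converse, assume $I(a)=I(b)$ and set $d=a-b$, so that $a=b+d$. My plan is to feed each of the two inclusions $I(a)\subseteq I(b)$ and $I(b)\subseteq I(a)$ into Proposition \ref{Comparison of quasi inverse set}, and then convert the resulting intersection conditions into direct-sum decompositions of principal right ideals by means of Lemma \ref{Jain-Prasad}.

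First I would apply Proposition \ref{Comparison of quasi inverse set} to the inclusion $I(a)\subseteq I(b)$ with the decomposition $a=d+b$. This yields $bR\cap dR=0$ and $Rb\cap Rd=0$. Since $a=b+d$ is regular, condition (3) of Lemma \ref{Jain-Prasad} holds, so it upgrades to the internal direct sum $aR=bR\oplus dR$; in particular $bR\subseteq aR$. Next I would run the same proposition on the reverse inclusion $I(b)\subseteq I(a)$, now writing $b=d'+a$ with $d'=b-a=-d$. Because $d'R=dR$ and $Rd'=Rd$, the proposition again supplies $aR\cap dR=0$ and $Ra\cap Rd=0$, and Lemma \ref{Jain-Prasad} (applied to the regular element $b=a+d'$) produces $bR=aR\oplus dR$; in particular $aR\subseteq bR$.

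Combining the two containments gives $aR=bR$. Substituting this back into $aR=bR\oplus dR=aR\oplus dR$ forces $dR=0$, whence $d=d\cdot 1=0$ and therefore $a=b$, which is exactly what I want.

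The one point I expect to be delicate is conceptual rather than computational: Proposition \ref{Comparison of quasi inverse set} returns a symmetric-looking, intersection-zero condition, so one might worry that treating both inclusions through it yields interchangeable information incapable of distinguishing $a$ from $b$. The asymmetry is restored precisely by Lemma \ref{Jain-Prasad}, which records \emph{which} of the two principal right ideals contains the other; it is the resulting clash $aR\subseteq bR\subseteq aR$, together with the directness of the sum $aR=bR\oplus dR$, that collapses $d$ to $0$. I note that only the right-ideal side of each conclusion is actually used, though both the left and right conditions are needed to invoke Lemma \ref{Jain-Prasad}.
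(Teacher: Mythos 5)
Your proof is correct. It follows the same skeleton as the paper's argument: decompose $a=b+d$, convert $I(a)\subseteq I(b)$ via Proposition \ref{Comparison of quasi inverse set} into the intersection conditions, upgrade them by Lemma \ref{Jain-Prasad} to $aR=bR\oplus dR$, and finish with the collapse $aR=aR\oplus dR$, which forces $dR=0$ and hence $d=0$. Where you genuinely diverge is in how $aR=bR$ is obtained. The paper gets it in one stroke from Proposition \ref{Invariance}: since $I(a)=I(b)$, both $aI(b)a=\{a\}$ and $bI(a)b=\{b\}$ are singletons, so that proposition yields $a\in Rb\cap bR$ and $b\in Ra\cap aR$, i.e.\ $aR=bR$ and $Ra=Rb$. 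You instead exploit the symmetry of the hypothesis, running Proposition \ref{Comparison of quasi inverse set} and Lemma \ref{Jain-Prasad} a second time on the reverse inclusion $I(b)\subseteq I(a)$ with $b=a+d'$, $d'=-d$ (correctly noting $d'R=dR$, $Rd'=Rd$), which produces the companion decomposition $bR=aR\oplus dR$ and hence the mutual containments $bR\subseteq aR\subseteq bR$. Both routes are valid. Yours is marginally longer but more economical in its toolkit: once Proposition \ref{Comparison of quasi inverse set} is in hand, you never need to return to Proposition \ref{Invariance}. The paper's single invocation of Proposition \ref{Invariance} is shorter and also delivers $Ra=Rb$ alongside $aR=bR$, which is precisely what allows the authors to offer their alternative ending via Hartwig's theorem (units $u,v$ with $b=au=va$); your argument, using only the right-ideal sides, does not produce that by-product, though as you observe it does not need it.
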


\begin{proof}
We only need to prove the sufficency. So assume that $I(a)=I(b)$.
Proposotion \ref{Comparison of quasi inverse set} implies that we can write $%
a=b+d$ with $bR\cap dR=0,\; Rd\cap Rb=0$. 
Lemma \ref{Jain-Prasad} then gives that $(b+d)R=bR\oplus dR$. Since $%
I(a)=I(b)$ we also have $aI(b)a=\{a\}$ and $bI(a)b=\{b\}$ and Proposition %
\ref{Invariance} implies that $Ra=Rb$ and $aR=bR$. This leads to $%
aR=(b+d)R=bR\oplus dR=aR\oplus dR$. This forces $d$ to be zero and hence $%
a=b $, as desired.

Alternatively we may invoke Hartwig's result (cf. \cite{H}) in place of
Lemma \ref{Jain-Prasad}. 
This was pointed out to us by T.Y. Lam. Indeed, by our Proposition \ref%
{Invariance} we have $aR=bR$ and $Ra=Rb$, and thus by invoking Hartwig's
result, there exist units $u,v\in R$ such that $b=au=va$. If $x\in I(a)=I(b)$%
, then $axa=a$ and $bxb=b$. The last equality implies that $vaxau=au$ and
hence $va=a$. Thus $b=a$.
\end{proof}

\vspace{3mm}

\begin{corollary}
Let $R$ be a regular ring. Then $I(a)=I(b)$ if and only if $a=b$.
\end{corollary}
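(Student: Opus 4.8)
The plan is to deduce this corollary directly from Theorem \ref{I(a)=I(b)} by verifying that a regular ring meets the two hypotheses of that theorem. Two facts must be in place. First, as noted in the introduction, a regular ring is semiprime, which is precisely the standing assumption on $R$ in Theorem \ref{I(a)=I(b)}. Second, in a regular ring one has $Reg(R)=R$ by definition, so any pair of elements $a,b\in R$ automatically lies in $Reg(R)$ and the theorem is applicable to them.

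With both requirements satisfied, I would simply invoke Theorem \ref{I(a)=I(b)}: for the semiprime ring $R$ and the regular elements $a,b$, the theorem already asserts that $I(a)=I(b)$ if and only if $a=b$. No additional argument is required, since the corollary is nothing more than the specialization of the theorem to the case $Reg(R)=R$.

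Because there is no genuine computation involved, there is essentially no obstacle to overcome. The only point one should not overlook is the recollection that a regular ring is semiprime, since it is this property that makes Proposition \ref{Invariance} and Proposition \ref{Comparison of quasi inverse set}, on which Theorem \ref{I(a)=I(b)} ultimately depends, valid. Once that observation is recorded, the conclusion follows at once from the theorem.
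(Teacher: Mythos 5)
Your proposal is correct and matches the paper's intent exactly: the corollary is stated in the paper as an immediate specialization of Theorem \ref{I(a)=I(b)}, using precisely the two facts you cite (a regular ring is semiprime, and $Reg(R)=R$ so all elements are regular). No further comment is needed.
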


\vspace{2mm}


\vspace{2mm}

\begin{remark}
{\rm Pace Nielsen remarked that, in the above theorem, the semiprime
hypothesis can be replaced by the assumption that $a-b$ is regular. So
assume $I(a)=I(b)$, and $a-b\in Reg(R)$. As in our Proposition \ref%
{Invariance} we obtain 
\begin{equation*}
bt(1-aa_0)b=0,\quad\quad\quad (1)
\end{equation*}
for any $t\in R $. If $b_1$ is a reflexive inverse of $b$, we obtain, for
any $t$ in $R$, $ataa_0b=(ab_1a)taa_0b=a(b_1bb_1)ataa_0b= ab_1(bb_1ataa_0b)$%
. Replace $t$ by $b_1at$ in (1) and obtain 
\begin{equation*}
ataa_0b=atb. \quad\quad\quad (2)
\end{equation*}
For $z\in I(a-b)$ we have $bzb=bza + azb-aza+a-b$. Using this equality we
compute 
\begin{equation*}
bzb=bzba_0b=(bza+azb-aza +a-b)a_0b=bzaa_0b+azb-azaa_0b+aa_0b-b.
\end{equation*}
Using formulae (1) and (2) we get $aa_0b=b$ so that $bR\subseteq aR$.
Symmetric arguments leads to $aR=bR$ and $Ra=Rb$ and Hartwig's theorem
finishes the proof. 
}
\end{remark}

\bigskip

\section{Reflexive inverses for semiprime rings}

We conclude by characterizing the equality of $Ref(a)=Ref(b),$ and obtain the analogue
of Theorem \ref{I(a)=I(b)} for reflexive inverses of semiprime rings.

\begin{theorem}
\label{ref(a)=ref(b)} Let $R$ be a semiprime ring such that $a,b\in Reg(R)$.
Then $Ref(a)=Ref(b)$ if and only if $a=b$.
\end{theorem}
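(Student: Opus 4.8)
The plan is to reduce the statement to the endgame of Theorem \ref{I(a)=I(b)}, i.e.\ to prove that $Ref(a)=Ref(b)$ already forces the one-sided ideal equalities $aR=bR$ and $Ra=Rb$. Once these are available, the Hartwig argument (cf.\ \cite{H}) used for Theorem \ref{I(a)=I(b)} applies verbatim: writing $b=au=va$ for units $u,v$ and choosing any $x\in Ref(a)=Ref(b)$ (nonempty by Proposition \ref{I(a) and ref(a)}), the relations $axa=a$ (which hold because $Ref(a)\subseteq I(a)$) and $bxb=b$ yield $vau=b=au$, hence $va=a$ and $b=a$. The forward implication is trivial, so all the work is in the reduction.

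To get the ideal equalities I would fix a common reflexive inverse $a_0\in Ref(a)=Ref(b)$, so that $aa_0a=a$, $a_0aa_0=a_0$, $ba_0b=b$ and $a_0ba_0=a_0$. The crucial observation is that the single-corner perturbations $a_0+a_0as(1-aa_0)$ and $a_0+(1-a_0a)saa_0$ are again reflexive inverses of $a$ for every $s\in R$ (a direct check using $aa_0a=a$ and $a_0aa_0=a_0$), hence also reflexive inverses of $b$. Sandwiching each of them between $b$'s and using $ba_0b=b$ then produces the bilinear relations
\begin{equation*}
ba_0a\,s\,(1-aa_0)b=0 \quad\text{and}\quad b(1-a_0a)\,s\,aa_0b=0 \qquad (s\in R).
\end{equation*}
I would stress that restricting to these two families is precisely what keeps the identities bilinear: the full description of $Ref(a)$ is genuinely quadratic, and using it directly produces relations that cannot be linearised in a ring that may carry additive torsion.

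Writing $p=b(1-a_0a)$ and $q'=(1-aa_0)b$, the engine of the argument is the collapsing identity $p=ba_0a\cdot a_0\cdot p$ (which follows from $a_0aa_0=a_0$ together with $ba_0b=b$). Feeding it into the first relation gives $pRq'\subseteq ba_0a\,R\,q'=0$, while the first relation itself is $ba_0a\,Rq'=0$; since $b=ba_0a+p$ this yields $bRq'=0$, whence $q'Rq'=(1-aa_0)(bRq')=0$ and semiprimeness forces $q'=0$, i.e.\ $b=aa_0b\in aR$. With $aa_0b=b$ in hand the second relation reads $pRb=0$, so $pRp=(pRb)(1-a_0a)=0$ and again $p=0$, i.e.\ $b=ba_0a\in Ra$. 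Running the identical computation with $a$ and $b$ interchanged (the hypothesis is symmetric and $a_0$ lies in both $Ref(a)$ and $Ref(b)$) gives $a\in bR\cap Rb$, so $aR=bR$ and $Ra=Rb$, which is exactly the input the Hartwig step needs.

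The step I expect to be the main obstacle is the dual discovery that one should work only with single-corner perturbations of $a_0$ and that there is a collapsing identity $p=ba_0a\,a_0\,p$ reducing everything to semiprimeness. Without this, the relations coming from $Ref(a)=Ref(b)$ are quadratic and resist any treatment that does not assume the ring is $2$-torsion-free; it is exactly this identity, combined with the two bilinear relations, that lets plain $xRx=0$ arguments close the proof.
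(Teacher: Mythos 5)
Your proof is correct and takes essentially the same route as the paper: your two ``single-corner'' families $a_0+a_0as(1-aa_0)$ and $a_0+(1-a_0a)saa_0$ are exactly the reflexive inverses the paper obtains by substituting $t\mapsto t(1-aa_0)$ and $t\mapsto(1-a_0a)t$ into its parametrization of $I(a)$ (the quadratic terms vanish identically, so no torsion issue ever arises), and both arguments then use semiprimeness in the form $xRx=0\Rightarrow x=0$ to reach $aR=bR$ and $Ra=Rb$, followed by Hartwig's theorem and the same unit-cancellation endgame. The only divergence is bookkeeping: your collapsing identity $p=ba_0a\,a_0\,p$ versus the paper's conjugation of the relation by $aa_0$ and $(1-a_0a)$.
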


\begin{proof}
Let $a_0\in Ref(a)=Ref(b)$. Since $a=0$ if and only if $Ref(a)=0$, we may assume
that $a$ and $b$ are not zero. Since $bRef(a)b=bRef(b)b=b$ and $%
Ref(a)=I(a)aI(a) $, we have that, for any $t$ in $R$, 
\begin{equation*}
b(a_0 + t - a_0ataa_0) a (a_0 + t - a_0ataa_0)b=b \quad \quad \quad (1)
\end{equation*}
Replacing $t$ by $(1-a_0a)t$ and noting that $a(1-a_0a)=0$, we obtain
successively $b(a_0a + (1-a_0a)ta)(a_0 +(1-a_0a)t)b=b$ and $b(a_0a +
(1-a_0a)ta)(a_0)b=b$ and so $ba_0b + b(1-a_0a)taa_0b=b$. Since $ba_0b=b$
this gives $b(1-a_0a)taa_0b=0$ for all $t\in R$. This leads to 
\begin{equation*}
aa_0b(1-a_0a)taa_0b(1-a_0a)=0 \quad \forall t\in R.
\end{equation*}%
The semiprimeness of $R$ implies that $aa_0b(1-a_0a)=0$. Left multiplying by 
$a_0\in ref(a)$, we get that $a_0b(1-a_0a)=0$ and hence since $a_0\in I(b)$
we conclude that $b(1-a_0a)=0$. 
Therefore we obtain that $Rb\subseteq Ra$ and by symmetry $Ra\subseteq Rb$
and hence $Ra=Rb$. In the same way replacing $t$ by $t(1-aa_0)$ in (1), we
obtain $aR=bR$. The Hartwig's Theorem then gives us that there exist
invertible elements $u,v\in R$ such that $a=bu$ and $b=av$. The argument at
the end of the proof of the semiprime case (cf. Theorem \ref{I(a)=I(b)})
proves the theorem.
\end{proof}

We now give an example of a ring,  showing that without the
semipriness hypothesis both of the above theorems are false.

\begin{example}
{\rm 
Consider the $\mathbb{F}_{2}$-algebra 
\begin{equation*}
R=\mathbb{F}_{2}\langle a,b,x\mid
axa=a,bxb=b,xax=x,xbx=x,a^{2}=b^{2}=ab=x^{2}=0\rangle 
\end{equation*}%
This ring is finite and $\{a,b,x,ax,bx,xa,xb,axb,bxa\}$ is a basis of $R$ as
an $\mathbb{F}_{2}$-vector space. It is easy to determine that $r(a)=r(b)\langle
a,b,ax,bx,axb,bxa\rangle $, $l(a)=l(b)=\langle a,b,xa,xb,axb,bxa\rangle $ $%
I(a)=x+R$, $I(b)=x+R$ , $ref(a)=\{x\}=ref(b)$. Of course, $(RaR)^{2}=0$%
, showing that $R$ is not semiprime. }
\end{example}

\smallskip
The next corollary is a direct consequence of 
Theorems \ref{I(a)=I(b)} and \ref{ref(a)=ref(b)}.
\begin{corollary}
Let $a,b$ be elements of a semiprime ring $R$.   Then the following are equivalent:
\begin{enumerate}
\item $I(a)=I(b)$,
\item $a=b$,
\item $ref(a)=Ref(b)$.
\end{enumerate}
\end{corollary}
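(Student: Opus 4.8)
The plan is to read off all three equivalences directly from the two theorems already proved, treating $a$ and $b$ as regular elements (as in the hypotheses of Theorems \ref{I(a)=I(b)} and \ref{ref(a)=ref(b)}). The cleanest route is to split the biconditional ring into the two directions around statement $(2)$. The implications out of $(2)$ are completely formal: if $a=b$, then the defining sets coincide, so $I(a)=I(b)$ and $Ref(a)=Ref(b)$ at once. No ring-theoretic input is needed here, so $(2)\Rightarrow(1)$ and $(2)\Rightarrow(3)$ are immediate.

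For the reverse directions I would simply invoke the theorems. The implication $(1)\Rightarrow(2)$ is exactly the sufficiency half of Theorem \ref{I(a)=I(b)}: in a semiprime ring, $I(a)=I(b)$ forces $a=b$. Likewise $(3)\Rightarrow(2)$ is the sufficiency half of Theorem \ref{ref(a)=ref(b)}. Chaining these through the common node $(2)$ yields $(1)\Leftrightarrow(2)\Leftrightarrow(3)$, which is precisely the assertion of the corollary. Thus the body of the argument is nothing more than concatenating the two theorems.

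The only point that deserves care — and the sole obstacle, such as it is — is the interplay with regularity. Each cited theorem presupposes $a,b\in Reg(R)$, whereas the corollary is stated for elements of a semiprime ring. I would reconcile this by recalling that $I(x)\neq\emptyset$ is \emph{equivalent} to $x$ being regular, and similarly for $Ref(x)$. Hence as soon as $I(a)=I(b)$ is a nonempty set, both $a$ and $b$ automatically lie in $Reg(R)$, and Theorem \ref{I(a)=I(b)} applies verbatim; the same observation applied to $Ref$ handles statement $(3)$. Under this natural reading — the one consistent with the results being quoted — the proof is exactly the immediate concatenation described above, and no new computation is required.
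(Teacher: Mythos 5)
Your proposal is correct and takes essentially the same route as the paper, which proves the corollary in one line as a direct consequence of Theorems \ref{I(a)=I(b)} and \ref{ref(a)=ref(b)}, chained through statement (2) exactly as you do. Your additional observation that nonemptiness of $I(a)=I(b)$ (or of $Ref(a)=Ref(b)$) already forces $a,b\in Reg(R)$ is a worthwhile refinement rather than a detour, since the corollary as stated omits the regularity hypothesis that both cited theorems require (and without which implication $(1)\Rightarrow(2)$ would fail, e.g.\ for non-regular $a\neq b$ in $\mathbb{Z}$, where $I(a)=I(b)=\emptyset$).
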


\begin{remark}
{\rm 
We close with the following comment. The question of the equality of
two elements in a regular ring that have the same set of inner inverses
arose while the authors have been working on the question: if, for a regular
self-injective ring $R$, $I(c)=I(a)+I(b)$, $a,b,c\in R$, is it true that $c$
is unique? If not, obtain a complete solution for $\mathrm{c.}$ We will
discuss that in another paper which is in progress. 
}
\end{remark}

\bigskip

\centerline{ {\bf Acknowledgment} } \smallskip

We thank the referee for his/her valuable comments including the remark that
our proof of Theorem 10 for prime rings can be carried to semiprime rings as
well.

\bigskip

\end{document}